\DeclareMathAlphabet{\mathpzc}{OT1}{pzc}{m}{it}
\theoremstyle{theorem}
\newtheorem{theorem}{Theorem}
\theoremstyle{definition}
\newtheorem*{definition}{Definition}
\newtheorem{remark}{Remark}
\newtheorem{example}[theorem]{Example}
\newtheorem{lemma}[theorem]{Lemma}
\newtheorem{proposition}[theorem]{Proposition}
\newcommand{\ui}{[0,1]}
\newcommand{\ov}{\overline}
\newcommand{\scrl}{\mathscr{L}}
\newcommand{\scru}{\mathscr{U}}
\newcommand{\scrv}{\mathscr{V}}
\newcommand{\bbn}{\mathbb{N}}
\newcommand{\bbq}{\mathbb{Q}}
\newcommand{\bbr}{\mathbb{R}}
\begin{document}

\title{Constructing arcs from paths using Zorn's Lemma}
\author[J. Brazas]{Jeremy Brazas}
\address{West Chester University\\ Department of Mathematics\\
West Chester, PA 19383, USA}
\email{jbrazas@wcupa.edu}

\subjclass{Primary 54C10,54C25 ; Secondary 	54B30   }
\keywords{path connected space, arcwise connected space, $\Delta$-Hausdorff space}

\date{\today}

\begin{abstract}
It is a well-known fact that every path-connected Hausdorff space is arcwise connected. Typically, this result is viewed as a consequence of a sequence of fairly technical results from continuum theory. In this note, we exhibit a direct and simple proof of this statement, which makes explicit use of Zorn's Lemma. Additionally, by carefully breaking down the proof, we identify a modest improvement to a class of spaces relevant to algebraic topology.
\end{abstract}

\maketitle

The following theorem is a well-established and commonly used result from general topology, which can be found in many introductory and reference textbooks on the subject, e.g. \cite{Eng89,HS,Moore,Nadler,willard}.

\begin{theorem}\label{hausdorfftheorem}
Every path-connected Hausdorff space is arcwise connected.
\end{theorem}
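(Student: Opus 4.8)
The plan is to use Zorn's Lemma to shrink the \emph{domain} of the path rather than the path itself. Fix a path $f\colon[0,1]\to X$ with $f(0)=a\neq b=f(1)$. First I would replace $X$ by the subspace $Z=f([0,1])$, which is compact, path connected, and Hausdorff, since an arc from $a$ to $b$ in $Z$ is an arc in $X$; only the fact that $Z$ is Hausdorff will be used, and this is what should power the promised refinement. Call a closed set $A\subseteq[0,1]$ \emph{admissible} if $0,1\in A$ and $f(c)=f(d)$ for every bounded component $(c,d)$ of $[0,1]\setminus A$, and write $\mathcal{A}$ for the set of admissible sets, partially ordered by \emph{reverse} inclusion. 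Since $[0,1]\in\mathcal{A}$, the poset is nonempty. Each $A\in\mathcal{A}$ produces a path $g_A\colon[0,1]\to Z$ by setting $g_A=f$ on $A$ and letting $g_A$ be constant, with value $f(c)=f(d)$, on each component $(c,d)$ of the complement; continuity of $g_A$ follows routinely from closedness of $A$ and continuity of $f$, and $g_A$ runs from $a$ to $b$ with $\operatorname{im}(g_A)=f(A)$. The strategy is then: (1) produce a minimal admissible set $M$ via Zorn; (2) show minimality forces $g_M$ to be \emph{monotone}, meaning $g_M(u)=g_M(v)$ implies $g_M$ is constant on $[u,v]$; (3) conclude $\operatorname{im}(g_M)=f(M)$ is an arc.

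For (1) the only real point is that every chain in $\mathcal{A}$ has an upper bound, i.e.\ that the intersection $A=\bigcap_i A_i$ of a decreasing chain of admissible sets is admissible. It is obviously closed and contains $0,1$; the content is the gap condition. Given a bounded component $(c,d)$ of $[0,1]\setminus A$, choose $p\in(c,d)$; for all large $i$ one has $p\notin A_i$, and the component $(c_i,d_i)$ of $[0,1]\setminus A_i$ containing $p$ then grows with $i$, stays inside $(c,d)$, and satisfies $f(c_i)=f(d_i)$ by admissibility of $A_i$. A short argument — using that $c_i\in A_j$ for $j\le i$, hence $\lim_i c_i\in A_j$ for every $j$, hence $\lim_i c_i\in A$ — gives $c_i\to c$ and $d_i\to d$; since $Z$ is Hausdorff, the constant net $f(c_i)=f(d_i)$ has the single limit $f(c)=f(d)$. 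I expect this — that the ``loop-pairing'' condition survives passage to the intersection of a chain — to be the main obstacle.

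For (2), suppose $g_M(u)=g_M(v)$ with $u<v$ but $g_M$ is not constant on $[u,v]$. Replacing $u$ and $v$ by suitable endpoints of components of $[0,1]\setminus M$, one may assume $u,v\in M$ (so $f(u)=f(v)$) and that $(u,v)$ meets $M$; then $M'=M\setminus(u,v)$ is closed, contains $0,1$, is again admissible, and is strictly smaller than $M$, contradicting minimality. For (3), by (2) the equivalence relation ``$g_M(s)=g_M(t)$'' has intervals as classes, and these classes are closed because $Z$ is Hausdorff; hence this is a closed (convex) equivalence relation on $[0,1]$ and $g_M$ factors as $[0,1]\twoheadrightarrow Q\hookrightarrow Z$ with $Q$ compact Hausdorff, connected, separable, and the map $Q\hookrightarrow Z$ injective. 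Ordering $Q$ by the order inherited from $[0,1]$ presents $Q$ as a compact, connected, separable linearly ordered space, so $Q$ is order-isomorphic and homeomorphic to $[0,1]$ (since $a\neq b$); therefore $Q\hookrightarrow Z$ is an embedding and its image $f(M)$ is an arc from $a$ to $b$. Steps (2) and (3) should be routine once (1) is established, and the order-theoretic characterization of $[0,1]$ used in (3) may simply be quoted.
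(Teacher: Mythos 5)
Your proposal is correct and is essentially the paper's argument in dual form: your minimal admissible closed set $M$ is the complement of the paper's maximal loop-cancellation, your chain argument in (1) --- nested complementary intervals $(c_i,d_i)$ with $f(c_i)=f(d_i)$ whose endpoints converge to $c$ and $d$, combined with uniqueness of limits in a Hausdorff space --- is exactly the paper's key lemma that chains of loop-cancellations have upper bounds, and your step (2) is its proof that maximality forces injectivity. The only divergence is the endgame, where the paper reparameterizes by an explicit Cantor-function-style collapsing map to get an injective path directly, while you pass to the monotone quotient and invoke the order-theoretic characterization of $[0,1]$; both routes are standard and yours is sound.
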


This result is typically proved using a combination of basic general topology, metrization theory, and the ``Arcwise Connectedness Theorem," which asserts that any locally connected continuum (connected, compact metric space), is arcwise connected. To prove the Arcwise Connectedness Theorem, one is required one to construct an arc between two points without prior knowledge that even one non-constant path in the space exists. Consequently, proofs of this theorem are quite delicate. In fact, several authors of well-known topology textbooks have made critical oversights in the proof \cite{Ball}. It is possible that the popularity of treating Theorem \ref{hausdorfftheorem} as a consequence of the Arcwise Connectedness Theorem, has shadowed elementary proofs and potential generalizations of Theorem \ref{hausdorfftheorem}. The author believes that having a variety of proofs for one result can often be beneficial. In this expository note, we give a direct, elementary proof of Theorem \ref{hausdorfftheorem} using Zorn's Lemma. We find this proof to be conceptually simpler than other known proofs and a nice example of how the axiom of choice can sometimes ease technicalities one might face if intentionally trying to avoid the axiom of choice. 

The primary difficulty in proving Theorem \ref{hausdorfftheorem} directly is ensuring that one can always replace a path with an injective path. We must begin with an arbitrary non-constant path $\alpha:\ui\to X$ (which may be space-filling) and search for a collection of pairwise-disjoint intervals $(a,b)\subseteq \ui$ such that $\alpha|_{[a,b]}$ is a loop. We refer to such a collection as a \textit{loop-cancellation of }$\alpha$. Provided that a loop-cancellation is maximal in the partial order of all loop-cancellations of $\alpha$, we may then ``pinch off" or ``delete" the corresponding subloops to obtain an injective path. To verify the existence of a maximal loop-cancellation, we employ the axiom of choice in the form of Zorn's Lemma. 

The author knows of two other published proofs of Theorem \ref{hausdorfftheorem}, which also take the approach of deleting loops from paths. We take a moment to mention a few things about them.

First, is S.B. Nadler's proof in \cite[Theorem 8.23]{Nadler}. Here, Theorem \ref{hausdorfftheorem} is proved for locally path-connected continua and then extended to all Hausdorff spaces using a metrization theorem. However, much like other textbook proofs, Nadler's line of argument is indirect, relying on a variety of far more general results, such as the``Maximum-Minimum Theorem" \cite[Exercise 4.34]{Nadler}. The Maximum-Minimum Theorem uses the compactness of the hyperspace of compact subsets of $\ui$ to allow one to find, what we are calling, a ``maximal loop-cancellation" without appealing to the axiom of choice.

Second, is a beautiful and direct proof by R. B\"{o}rger in the mostly overlooked note \cite{Borger}. This proof, as far as the author can tell, is the only published direct proof of Theorem \ref{hausdorfftheorem}. Rather than focusing on open sets as we do, B\"{o}rger takes the dual approach of constructing a nested sequence of closed sets $A_1\supseteq A_2\supseteq \cdots $ and, essentially, works to show the components of the complement of $\bigcap_{n}A_n$ form a maximal loop-cancellation. B\"{o}rger mentions in the introduction ``I am indebted to K.P. Hart for some simplifications, particularly for avoiding use of Zorn's lemma." Hence, we do not doubt that B\"{o}rger knew of a proof, which is similar in spirit to the one in this note.

Certainly, one could argue that our proof is logically redundant, because it unnecessarily uses the axiom of choice. However, there are often many benefits to exploring different proofs of well-known results. Moreover, there is a certain conceptual simplicity to our proof that the author does not find in any of the other methods of proof. Indeed, some of the technical ``weight" of other proofs appears to be absorbed by Zorn's Lemma. Those who freely use the axiom of choice may find it preferable.

Finally, in Section \ref{sectionfinal}, we note that by closely breaking down our ``from scratch" proof, it is possible to prove a modest generalization of Theorem \ref{hausdorfftheorem} that replaces the ``Hausdorff" hypothesis with a strictly weaker property that is relevant to categories commonly used in algebraic topology (see Theorem \ref{mainthm}).

\section{A proof of Theorem \ref{hausdorfftheorem}}\label{sectionproof}

First, we establish some notation and terminology. A \textit{path} is a continuous function $\alpha:[a,b]\to X$ from a closed real interval $[a,b]$. If $\alpha(a)=x$ and $\alpha(b)=y$, we say that $\alpha$ is a \textit{path from $x$ to} $y$. If $\alpha(a)=\alpha(b)$, we will refer to $\alpha$ as a \textit{loop}. Often, we will use the closed unit interval $\ui$ to be the domain of a path. Given paths $\alpha:[a,b]\to X$ and $\beta:[c,d]\to X$, we write $\alpha\equiv\beta$ if $\alpha=\beta\circ \phi$ for some increasing homeomorphism $\phi: [a,b]\to [c,d]$ and we say that $\alpha$ \textit{is a reparameterization of} $\beta$. 
If $\alpha:\ui\to X$ is a path, then $\alpha^{-}(t)=\alpha(1-t)$ is the reverse path. If $\alpha,\beta:\ui\to X$ are paths with $\alpha(1)=\beta(0)$, then $\alpha\cdot\beta:\ui\to X$ denotes the usual concatenation of the two paths.

\begin{definition}
A space $X$ is
\begin{enumerate}
\item \textit{path connected} if whenever $x,y\in X$, there exists a path from $x$ to $y$,
\item \textit{arcwise connected} if whenever $x\neq y$ in $X$, there exists a path $\alpha:\ui\to X$ from $x$ to $y$, which is a topological embedding, i.e. a homeomorphism onto its image.
\end{enumerate}
\end{definition}

Certainly, arcwise connected $\Rightarrow$ path connected. We do not consider ``monotone" paths (those which have connected fibers) as being of separate interest from injective-paths since a quotient map argument shows that every monotone path in a $T_1$ space may be replaced by an injective path with the same image.

Toward our proposed generalization of Theorem \ref{hausdorfftheorem}, we give the following definition.

\begin{definition}
We say that a space $X$ \textit{permits loop deletion} if whenever $\alpha:\ui\to X$ is a path and there exist $0\leq \cdots \leq a_3\leq a_2\leq a_1<b_1\leq b_2\leq b_3\leq \cdots \leq 1$ such that $\{a_n\}\to 0$, $\{b_n\}\to 1$, and $\alpha(a_n)=\alpha(b_n)$ for all $n\in\bbn$, then $\alpha(0)=\alpha(1)$.
\end{definition}

Intuitively, if $X$ permits loop deletion, then the scenario illustrated in Figure \ref{tfig} cannot occur, that is, there cannot exist paths $\alpha,\beta:\ui\to X$ such that $\alpha(1/n)=\beta(1/n)$ for all $n\in\bbn$ and $\alpha(0)\neq \beta(0)$.

\begin{figure}[h]
\centering \includegraphics[height=1.8in]{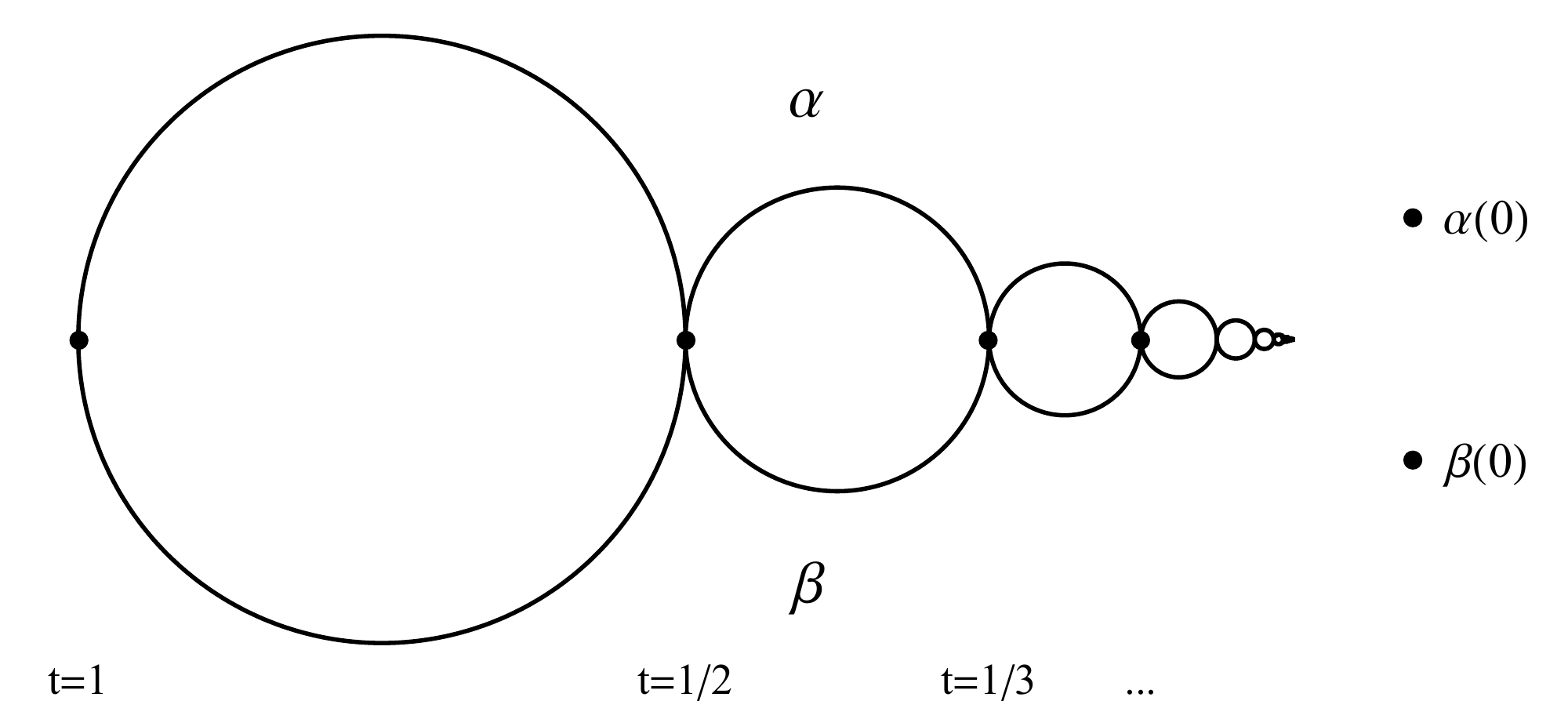}
\caption{\label{tfig}In a space that permits loop deletion, there cannot be two paths $\alpha,\beta$ which agree on a sequence converging to $0$ but for which $\alpha(0)\neq\beta(0)$.}
\end{figure}

\begin{remark}
If a space $X$ is Hausdorff, then convergent sequences in $X$ have unique limits. Hence, every Hausdorff space permits loop deletion. 
\end{remark}

Given a path $\alpha:\ui\to X$, a \textit{loop-cancellation of }$\alpha$ is a set $\scru$ of pairwise-disjoint open intervals in $\ui$ such that for each $(a,b)\in \scru$, we have $\alpha(a)=\alpha(b)$, i.e. such that $\alpha|_{[a,b]}$ is a loop (see Figure \ref{fig2}). We endow each loop-cancellation with the linear ordering inherited from the ordering of $\ui$. Let $\scrl(\alpha)$ denote the set of all loop-cancellations of $\alpha$. We give $\scrl(\alpha)$ the following partial order: $\scrv\geq \scru$ in $\scrl(\alpha)$ if for each $U\in \scru$, there exists $V\in\scrv$ such that $U\subseteq V$. Clearly, the empty set is minimal in $\scrl(\alpha)$. We say a loop-cancellation is \textit{maximal in $\scrl(\alpha)$} if it is maximal with respect to this partial order on $\scrl(\alpha)$. If $\alpha$ is itself a loop, then $\{(0,1)\}$ is a maximal loop-cancellation of $\alpha$.

To construct injective paths, we wish to ``delete" loops occurring as subpaths of $\alpha$. Formally, this must be done within the domain by collapsing the closure of each element of a loop-cancellation to a single point so that the resulting quotient space is homeomorphic to $\ui$. The next definition identifies when such an operation is possible.

\begin{definition}
We say that a loop-cancellation $\scru\in\scrl(\alpha)$ is \textit{collapsible} if $\scru\neq \{(0,1)\}$ and if the elements of $\scru$ have pairwise disjoint closures.
\end{definition}

\begin{remark}\label{collapsableremark}
If $\alpha$ is not a loop and $\scru$ is maximal in $\scrl(\alpha)$, then $\scru$ is necessarily collapsible. Otherwise, we would have $(a,b),(b,c)\in\scru$, which implies $\alpha(a)=\alpha(b)=\alpha(c)$. We could then replace these two with $(a,c)$ to form a loop-cancellation that is greater in $\scrl(\alpha)$.
\end{remark}

Let $\scru$ be a collection of open intervals in $(0,1)$ with pairwise-disjoint closures. Basic constructions from real analysis give the existence of non-decreasing, continuous surjections $\Gamma_{\scru}:\ui\to \ui$ that are constant on the closure of each set $U\in\scru$ and which are strictly increasing on $\ui\backslash \bigcup\{\ov{I}\mid I\in\scru\}$. We refer to such a function $\Gamma_{\scru}$ as a \textit{collapsing function for }$\scru$. For example, if $\scru$ is the set of components of the complement of the ternary Cantor set, then the ternary Cantor function is a collapsing function for $\scru$. Note that $\Gamma_{\scru}$ is not unique to $\scru$ but if $\Gamma_{\scru}$ and $\Gamma_{\scru}'$ are two collapsing functions for $\scru$, then we have $h\circ \Gamma_{\scru}=\Gamma_{\scru}'$ for some increasing homeomorphism $h:\ui\to \ui$.

\begin{definition}
Suppose $\scru\in\scrl(\alpha)$ is collapsible and let $\Gamma_{\scru}$ be a collapsing function for $\scru$. We write $\alpha_{\scru}:\ui\to X$ for the path, which agrees with $\alpha$ on $\ui\backslash \bigcup\scru$ and such that $\alpha|_{\scru}$ is constant on each $I\in\scru$, that is, $\alpha_{\scru}(\overline{I})=\alpha(\partial I)$ for each $I\in\scru$. By the universal property of the quotient map $\Gamma_{\scru}$, there exists a unique path $\beta:\ui\to X$ satisfying $\beta\circ \Gamma_{\scru}=\alpha_{\scru}$, which we refer to as a \textit{$\scru$-reduction of $\alpha$.}
\end{definition}

Since any two collapsing functions for $\scru$ differ by a homeomorphism, it follows that if $\beta$ and $\beta '$ are two $\scru$-reductions of $\alpha$ (constructed using different collapsing functions), then $\beta\equiv \beta '$.

\begin{figure}[h]
\centering \includegraphics[height=1.7in]{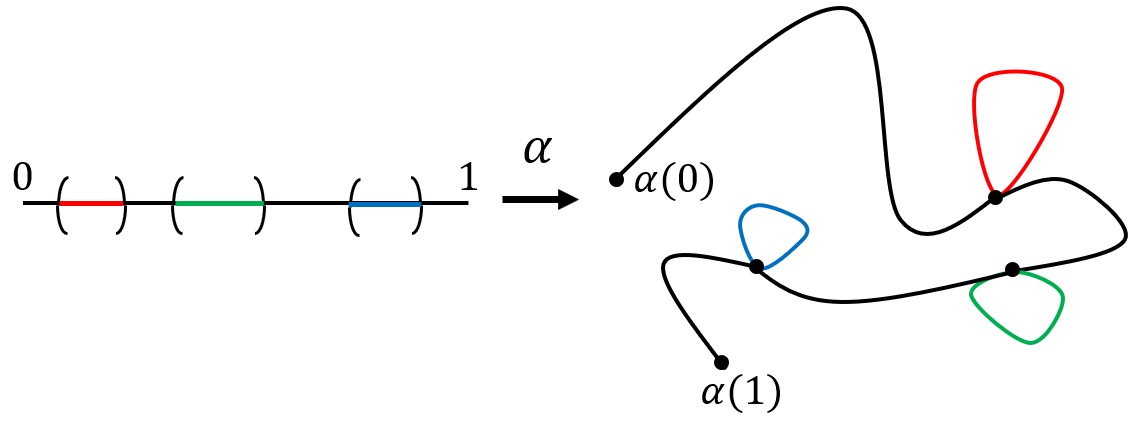}
\caption{\label{fig2}A loop-cancellation $\scru$ of a path $\alpha$ consisting of three intervals each of which is mapped to a loop. In this case, the loop-cancellation is maximal and a parameterization of the black arc is a $\scru$-reduction of $\alpha$.}
\end{figure}

\begin{proposition}\label{loopreductionisinjectiveprop}
If $\scru\in\scrl(\alpha)$ is maximal and $\beta$ is a $\scru$-reduction of $\alpha$, then $\beta$ is injective.
\end{proposition}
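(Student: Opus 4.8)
The plan is to argue by contradiction against the maximality of $\scru$. Suppose $\beta$ is not injective, so that $\beta(s)=\beta(t)$ for some $s<t$ in $\ui$; I will produce a loop-cancellation $\scru'\in\scrl(\alpha)$ with $\scru'>\scru$. Since $\beta$ is a $\scru$-reduction, $\scru$ is collapsible, so fix a collapsing function $\Gamma=\Gamma_{\scru}$ with $\beta\circ\Gamma=\alpha_{\scru}$.

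The first step is to pull the coincidence $\beta(s)=\beta(t)$ back through $\Gamma$ to a coincidence of $\alpha$ itself. As $\Gamma$ is a continuous non-decreasing surjection, $\Gamma^{-1}(s)=[p_1,p_2]$ and $\Gamma^{-1}(t)=[q_1,q_2]$ are nonempty closed subintervals of $\ui$, and $s<t$ forces $p_2<q_1$. I would then check that $p_2,q_1\notin\bigcup\scru$: if $p_2\in(a,b)\in\scru$, then $\Gamma$ is constant on $[a,b]$, so $\Gamma(b)=\Gamma(p_2)=s$ with $b>p_2$, contradicting $p_2=\max\Gamma^{-1}(s)$; the argument at $q_1$ is symmetric. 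Hence $\alpha_{\scru}$ coincides with $\alpha$ at $p_2$ and at $q_1$, and therefore
\[
\alpha(p_2)=\alpha_{\scru}(p_2)=\beta(s)=\beta(t)=\alpha_{\scru}(q_1)=\alpha(q_1),
\]
so $\alpha|_{[p_2,q_1]}$ is a loop and $(p_2,q_1)$ is a candidate to enlarge $\scru$.

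The second step is the insertion of $(p_2,q_1)$, which rests on the dichotomy: for every $I=(a,b)\in\scru$, either $I\cap(p_2,q_1)=\emptyset$ or $\ov{I}\subseteq(p_2,q_1)$. To see this, suppose some $x$ lies in $I\cap(p_2,q_1)$; then $\Gamma$ is constant on $[a,b]$ with value $v=\Gamma(x)$, and since $p_2<x<q_1$ lies strictly between the fibers $\Gamma^{-1}(s)$ and $\Gamma^{-1}(t)$, monotonicity gives $s<v<t$. If $a\le p_2$ we would get $v=\Gamma(a)\le s$, and if $b\ge q_1$ we would get $v=\Gamma(b)\ge t$; both are impossible, so $p_2<a$ and $b<q_1$, i.e.\ $\ov{I}\subseteq(p_2,q_1)$. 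Now put
\[
\scru'=\{(p_2,q_1)\}\cup\{\,I\in\scru : I\cap(p_2,q_1)=\emptyset\,\}.
\]
By the dichotomy, $\scru'$ is a family of pairwise-disjoint open intervals of $\ui$ on each of which $\alpha$ is a loop, so $\scru'\in\scrl(\alpha)$. Every $I\in\scru$ is contained in a member of $\scru'$ (in itself if disjoint from $(p_2,q_1)$, and in $(p_2,q_1)$ otherwise), so $\scru'\ge\scru$; moreover $\scru\ge\scru'$ is impossible, since it would require some $V\in\scru$ with $(p_2,q_1)\subseteq V$, making $\Gamma$ constant on a set containing both $p_2$ and $q_1$ and hence $s=t$. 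Thus $\scru'>\scru$, contradicting the maximality of $\scru$, and $\beta$ is injective.

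I expect the main obstacle to be this last step --- the bookkeeping that ensures $(p_2,q_1)$ can be adjoined to a suitable truncation of $\scru$ so as to land \emph{strictly} above $\scru$ in $\scrl(\alpha)$ rather than at an incomparable (or equal) point. Everything there reduces to the dichotomy above, which is in turn a purely order-theoretic consequence of the defining properties of a collapsing function (non-decreasing, and constant exactly on the closures of the members of $\scru$); the other steps are routine.
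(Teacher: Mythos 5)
Your argument is correct and is essentially the paper's proof: both pull the coincidence of $\beta$ back through the collapsing function to points outside $\bigcup\scru$, deduce $\alpha(c)=\alpha(d)$ there, and adjoin the resulting interval to the members of $\scru$ disjoint from it, contradicting maximality. The only (immaterial) difference is that you use the inner interval $[\max\Gamma^{-1}(s),\min\Gamma^{-1}(t)]$ whereas the paper uses the full preimage $\Gamma^{-1}([a,b])$.
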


\begin{proof}
We prove the contrapositive. Suppose $0\leq a<b\leq 1$, with $\beta(a)=\beta(b)$. Fix collapsing function $\Gamma_{\scru}$ for $\scru$ such that $\beta\circ\Gamma_{\scru}=\alpha_{\scru}$. Since $\Gamma_{\scru}$ is non-decreasing, $\Gamma_{\scru}^{-1}([a,b])$ is a closed interval, call it $[c,d]$. Notice that for each $I\in\scru$, either $I\subseteq (c,d)$ or $I\cap [c,d]=\emptyset$. Now $c,d\notin\bigcup\scru$ and $\alpha_{\scru}(c)=\beta(a)=\beta(b)=\alpha_{\scru}(d)$. Since $\alpha$ agrees with $\alpha_{\scru}$ on $\ui\backslash\bigcup\scru$, we have $\alpha(c)=\alpha(d)$. Now $\scrv=\{I\in\scru\mid I\cap [c,d]=\emptyset\}\cup\{(c,d)\}$ is a loop-cancellation for $\alpha$ that is greater than $\scru$ in $\scrl(\alpha)$. Hence, $\scru$ is not maximal.
\end{proof}

It is not entirely obvious that a maximal loop-cancellation must exist for an arbitrary path. Indeed, it is possible for distinct loop-cancellations to nest and overlap in complicated ways. Compounding the issue is the fact that different maximal loop-cancellations may result in different reductions. For example, suppose $\beta,\gamma:\ui\to X$ are injective paths for which $\beta((0,1))\cap \gamma((0,1))=\emptyset$, $\beta(0)=\gamma(0)$, and $\beta(1)=\gamma(1)$. If $\alpha=(\beta\cdot \beta^{-})\cdot \gamma$, then $\scru=\{(0,1/2)\}$ is maximal in $\scrl(\alpha)$ with $\gamma$ as a $\scru$-reduction and $\scrv=\{(1/4,1)\}$ is maximal with $\beta$ as a $\scrv$-reduction. Based on these observations, it is natural to attempt an application of Zorn's Lemma.

\begin{lemma}\label{maximalloopcancellationlemma}
If $X$ permits loop deletion then for every path $\alpha:\ui\to X$, there exists a maximal loop-cancellation $\scrv\in\scrl(\alpha)$.
\end{lemma}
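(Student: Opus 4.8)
The plan is to apply Zorn's Lemma to the partially ordered set $\scrl(\alpha)$, which is non-empty since $\emptyset\in\scrl(\alpha)$; so it suffices to show that an arbitrary chain $\mathcal{C}\subseteq\scrl(\alpha)$ has an upper bound in $\scrl(\alpha)$. I would take the evident candidate: let $G=\bigcup_{\scru\in\mathcal{C}}\bigcup\scru$, an open subset of $\ui$ (which contains neither $0$ nor $1$), and let $\mathscr{W}$ be the set of connected components of $G$. Each component is an open interval, distinct components are disjoint, and any interval occurring in a member of $\mathcal{C}$ is a connected subset of $G$ and hence lies in exactly one component; this immediately gives $\mathscr{W}\geq\scru$ for every $\scru\in\mathcal{C}$. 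What is left---and where the hypothesis ``permits loop deletion'' is spent---is to check $\mathscr{W}\in\scrl(\alpha)$, i.e. that $\alpha(a)=\alpha(b)$ for every component $(a,b)$ of $G$.

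To verify this, I would fix a component $(a,b)$ and study the family $\mathcal{F}$ of all intervals $U$ that occur in some member of $\mathcal{C}$ and satisfy $U\subseteq(a,b)$. Two facts are needed. First, $\mathcal{F}$ is \emph{laminar}: any two of its members are disjoint or nested. Indeed, if $U\in\scru$ and $U'\in\scru'$ with, say, $\scru\leq\scru'$, then $U\subseteq V$ for some $V\in\scru'$, and since $\scru'$ is a set of pairwise disjoint intervals, either $V=U'$, whence $U\subseteq U'$, or $V\cap U'=\emptyset$, whence $U\cap U'=\emptyset$. Second, $\bigcup\mathcal{F}=(a,b)$, since any $U\in\bigcup\mathcal{C}$ meeting the component $(a,b)$ is contained in it.

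Now, for each $\delta\in(0,(b-a)/2)$, the set $[a+\delta,b-\delta]$ is compact and connected, hence covered by finitely many members of $\mathcal{F}$; by laminarity the $\subseteq$-maximal intervals among those finitely many are pairwise disjoint and have the same union, so the connected set $[a+\delta,b-\delta]$ lies inside a \emph{single} one of them, say $(p_\delta,q_\delta)\in\mathcal{F}$. Thus $p_\delta<a+\delta$, $q_\delta>b-\delta$, and $\alpha(p_\delta)=\alpha(q_\delta)$ because $(p_\delta,q_\delta)$ is an interval of some loop-cancellation in $\mathcal{C}$. Letting $\delta\to 0$ and passing to a subsequence, I get loop-intervals $(p_k,q_k)$ with $p_k$ strictly decreasing to $a$, $q_k$ strictly increasing to $b$, $\alpha(p_k)=\alpha(q_k)$ for all $k$, and $p_1<q_1$. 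Composing $\alpha$ with the increasing affine homeomorphism $\ui\to[a,b]$ reproduces verbatim the configuration in the definition of ``permits loop deletion,'' which therefore forces $\alpha(a)=\alpha(b)$. Hence $\mathscr{W}$ is an upper bound for $\mathcal{C}$ in $\scrl(\alpha)$, and Zorn's Lemma yields a maximal element $\scrv\in\scrl(\alpha)$.

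I expect the crux to be exactly the passage from ``$(a,b)$ contains arbitrarily large loop-subintervals of $\alpha$'' to ``the endpoints $a$ and $b$ are themselves joined by a loop of $\alpha$.'' The point is that laminarity of $\mathcal{F}$---which is forced by $\mathcal{C}$ being a chain---combined with the compactness and connectedness of $[a+\delta,b-\delta]$, lets one trap this set inside a \emph{single} member of $\mathcal{F}$; for an arbitrary subfamily of $\bigcup\mathcal{C}$ one would only get finitely many intervals, and the required limiting loop at the endpoints could fail to exist. The loop-deletion hypothesis is then precisely what licenses the limit $\delta\to 0$.
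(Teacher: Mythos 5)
Your proposal is correct and follows essentially the same route as the paper's proof: both apply Zorn's Lemma with the components of $\bigcup_{\scru\in\mathcal{C}}\bigcup\scru$ as the candidate upper bound, and both use compactness of slightly shrunk closed subintervals together with the chain structure (your laminarity argument plays the role of the paper's passage to a maximal member of the chain covering $[a_n,b_n]$) to trap each such subinterval in a single loop-interval before invoking loop deletion in the limit. The only quibble is your claim that the $p_k$ can be taken \emph{strictly} decreasing and the $q_k$ strictly increasing (they may be eventually constant at $a$ or $b$, e.g. if an interval of the form $(a,q)$ lies in $\mathcal{F}$), but since the definition of permitting loop deletion only requires non-strict monotonicity this is harmless.
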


\begin{proof}
The conclusion is clear if $\alpha$ is a loop so we assume $\alpha(0)\neq \alpha(1)$. The lemma will follow from Zorn's Lemma once we show that every linearly ordered suborder of $\scrl(\alpha)$ has an upper bound. Suppose $S\subseteq\scrl(\alpha)$ is linearly ordered when given the order inherited from $\scrl(\alpha)$. Now $V=\bigcup_{\scru\in S}(\bigcup \scru)$ is an open subset of $(0,1)$. Let $\scrv$ denote the set of connected components of $V$. To show that $S$ has an upper bound in $\scrl(\alpha)$, it suffices to show that $\scrv\in \scrl(\alpha)$. Let $(c,d)\in\scrv$. If $(c,d)\in\scru$ for some $\scru\in S$, it is clear that $\alpha(c)=\alpha(d)$. Suppose that $(c,d)\notin \scru$ for any $\scru\in S$. 

Pick $c<\cdots <a_3<a_2<a_1<b_1<b_2<b_3<\cdots <d$ where $\{a_{n}\}\to c$ and $\{b_n\}\to d$. Fixing $n\in\bbn$, $\bigcup S$ is an open cover of the closed interval $[a_n,b_n]$ and so we may find finitely many $I_{n,1},I_{n,2},\dots, I_{n,k_n}\in \bigcup S$, which cover $[a_n,b_n]$. Find $\mathscr{W}_{n,j}\in S$ with $I_{n,j}\in \mathscr{W}_{n,j}$. Since $S$ is linearly ordered, we may define $\scru_n$ to be the maximum of $\{\mathscr{W}_{n,1},\mathscr{W}_{n,2},\dots , \mathscr{W}_{n,k_n}\}$ in $S$. Since $I_{n,j}\subseteq\bigcup\mathscr{W}_{n,j}\subseteq \bigcup\scru_n$ for all $j$, it follows that $[a_n,b_n]\subseteq \bigcup\scru_n$. For some interval $(c_n,d_n)\in \scru_n$ we have $[a_n,b_n]\subseteq (c_n,d_n)$. Moreover, since $(c_n,d_n)$ meets the connected connected component $(c,d)$ of $V$, we have $(c_n,d_n)\subseteq (c,d)$.

We now have $c\leq \cdots\leq  c_3\leq c_2\leq c_1<d_1\leq d_2\leq d_3\leq \cdots \leq d$ where $\{c_n\}\to c$ and $\{d_n\}\to d$. Moreover, since $(c_n,d_n)\in \scru_n\in S$, we have $\alpha(c_n)=\alpha(d_n)$ for all $n\in\bbn$. Finally, since we are assuming that $X$ permits loop deletion, it follows that $\alpha(c)=\alpha(d)$. Therefore, $\scrv$ is a loop cancellation, i.e. $\scrv\in\scrl(\alpha)$.
\end{proof}

\begin{proof}[Proof of Theorem \ref{hausdorfftheorem}]
Suppose $X$ is a path connected Hausdorff space and fix $x,y\in X$ with $x\neq y$. Find a path $\alpha:\ui\to X$ from $x$ to $y$. According to Lemma \ref{maximalloopcancellationlemma}, there exists a maximal loop-cancellation $\scrv\in\scrl(\alpha)$. Since $\scrv$ must be collapsible (Remark \ref{collapsableremark}), we may choose a collapsing function $\Gamma_{\scrv}$ for $\scrv$. Now the $\scrv$-reduction $\beta:\ui\to X$ satisfying $\beta\circ\Gamma_{\scrv}=\alpha_{\scrv}$ is injective by Proposition \ref{loopreductionisinjectiveprop}. Since $\Gamma_{\scrv}:\ui\to\ui$ is a non-decreasing surjection, we have $\beta(0)=x$ and $\beta(1)=y$. Moreover, since $\ui$ is compact and $X$ is Hausdorff, the continuous injection $\beta$ is a homeomorphism onto its image. This proves $X$ is arcwise connected.
\end{proof}

\section{What other spaces permit loop deletion?}\label{sectionfinal}

There are, in fact, some commonly used spaces that permit loop deletion but which are not necessarily Hausdorff. Such spaces become particularly relevant when general constructions fail to be closed under the Hausdorff property.

\begin{definition}
We say a space $X$ is
\begin{enumerate}
\item \textit{weakly Hausdorff} if for every map $f:K\to X$ from a compact Hausdorff space $K$, the image $f(K)$ is closed in $X$,
\item \textit{$\Delta$-Hausdorff} if for every path $\alpha:\ui\to X$, the image $\alpha(\ui)$ is closed in $X$.
\end{enumerate}
\end{definition}

The following implications hold: Hausdorff $\Rightarrow$ weakly Hausdorff $\Rightarrow$ $\Delta$-Hausdorff $\Rightarrow$ $T_1$. The weakly Hausdorff property is particularly relevant to algebraic topology, where it provides a suitable ``separation axiom" within the category of compactly generated spaces \cite{Strickland}. As noted in \cite{McCord}, if one is performing ``gluing" constructions involving quotient topologies in algebraic topology, the weakly Hausdorff property is often preferable over the Hausdorff property since many such constructions preserve the former property but not the latter. The $\Delta$-Hausdorff property is the analogue in the category of ``$\Delta$-generated spaces" \cite{CSWdiff,FRdirected} and offers the same kind of conveniences.

\begin{example}
The one-point compactification $X^{\ast}$ of any non-locally compact Hausdorff space $X$ is weakly Hausdorff but not Hausdorff. This occurs, for example, if $X$ is $\bbq$, $\bbr^{\omega}$ with the product topology, or $\ui^{\omega}$ in the uniform topology. One can attach arcs or other spaces to $X^{\ast}$ to obtain path-connected examples. Hence, there are many $\Delta$-Hausdorff spaces that are not Hausdorff.
\end{example}

To extend Theorem \ref{hausdorfftheorem}, we check that all of the ingredients of the proof work for $\Delta$-Hausdorff spaces.

\begin{lemma}\label{deltaclosedlemma}
If $X$ is $\Delta$-Hausdorff, then $X$ permits loop deletion.
\end{lemma}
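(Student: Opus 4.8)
The plan is to avoid constructing any auxiliary paths and instead apply the $\Delta$-Hausdorff hypothesis directly to the ``tail restrictions'' $\alpha|_{[b_n,1]}$. Write $c_n=\alpha(a_n)=\alpha(b_n)$. The first step is the observation that, since $\{a_n\}\to 0$ and $\alpha$ is continuous, $c_n\to\alpha(0)$ in $X$; this is the only place the ``left'' sequence $\{a_n\}$ is used, and it is where $\{a_n\}\to 0$ enters.

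Next, I would fix $N\in\bbn$ and note that a reparameterization of $\alpha|_{[b_N,1]}$ is a path $\ui\to X$, so by $\Delta$-Hausdorffness its image $\alpha([b_N,1])$ is closed in $X$ (in the degenerate case $b_N=1$ this image is the singleton $\{\alpha(1)\}$, which is closed because $\Delta$-Hausdorff implies $T_1$). Since $\{b_n\}$ is non-decreasing with $b_n\le 1$, we have $c_n=\alpha(b_n)\in\alpha([b_N,1])$ for every $n\ge N$. A closed set contains the limits of all of its convergent sequences (this holds in an arbitrary topological space, since the complement would be an open neighborhood of any putative exterior limit), so from $c_n\to\alpha(0)$ I conclude $\alpha(0)\in\alpha([b_N,1])$.

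As $N$ was arbitrary, $\alpha(0)\in\bigcap_{N}\alpha([b_N,1])$. I would then pick, for each $N$, a point $t_N\in[b_N,1]$ with $\alpha(t_N)=\alpha(0)$; since $b_N\le t_N\le 1$ and $\{b_N\}\to 1$, we get $t_N\to 1$, and continuity of $\alpha$ at $1$ gives $\alpha(t_N)\to\alpha(1)$. But $\{\alpha(t_N)\}$ is the constant sequence with value $\alpha(0)$, and in a $T_1$ space a constant sequence has a unique limit, so $\alpha(0)=\alpha(1)$, which is exactly the conclusion. (Equivalently, one can note that the sets $\alpha^{-1}(\alpha(0))\cap[b_N,1]$ are nonempty, nested, and closed in the compact space $\ui$, so by the finite intersection property they share a point $t$, which must equal $1$ because $\{b_N\}\to 1$; hence $\alpha(0)=\alpha(t)=\alpha(1)$.)

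I do not expect a serious obstacle; the only points needing a little care are the standard facts that closed sets are sequentially closed in arbitrary spaces, that $\Delta$-Hausdorff $\Rightarrow T_1$ (so singletons are closed and constant sequences converge uniquely), and the trivial possibility $b_N=1$. It is perhaps worth remarking that the argument never uses the nesting of the intervals, the monotonicity of $\{a_n\}$, or the inequality $a_1<b_1$; it needs only $\{a_n\}\to 0$, $\{b_n\}$ non-decreasing with $\{b_n\}\to 1$, and $\alpha(a_n)=\alpha(b_n)$ for all $n$.
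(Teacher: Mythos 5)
Your proof is correct and takes essentially the same approach as the paper's, just reflected: the paper applies $\Delta$-Hausdorffness to the initial segments $\alpha([0,a_n])$, deduces that $\alpha(1)$ is attained at parameters tending to $0$, and finishes with the same $T_1$/constant-sequence step, while you apply it to the tails $\alpha([b_N,1])$ and argue toward $1$. The only differences are organizational (your direct formulation absorbs the paper's proof by contradiction, its case split on eventually constant sequences, and its WLOG), so nothing substantive separates the two arguments.
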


\begin{proof}
Suppose, to obtain a contradiction, that $\alpha(0)=x\neq y=\alpha(1)$. Set $x_n=\alpha(a_n)=\alpha(b_n)$ and note that $\{x_n\}$ converges to both $x$ and $y$ in $X$. If the sequences $\{a_n\}$ and $\{b_n\}$ stabilize to $0$ and $1$ respectively, then $\alpha(b_n)=x$ for sufficiently large $n$. Therefore, the constant sequence at $x$ converges to $y$. Since every $\Delta$-Hausdorff space is $T_1$, we obtain a contradiction. 

We now assume that one of the sequences $\{a_n\}$ or $\{b_n\}$ is not eventually constant. Without loss of generality, we may assume $\{a_n\}$ is not eventually constant. Thus $0<a_n$ for all $n\in\bbn$. Since $X$ is $\Delta$-Hausdorff, the sets $\alpha([0,a_n])$, $n\in\bbn$ are closed in $X$ and contain $\{x_m\mid m\geq n\}$. Since $\{x_m\}_{m\geq n}\to y$, we have $y\in \alpha([0,a_n])$. Since $\{a_n\}$ is non-increasing an converges to $0$, we may find a decreasing sequence $\{t_j\}$ in $(0,a_1]$ that converges to $0$ and for which $\alpha(t_j)=y$ for all $j\in\bbn$. However, since $\{t_j\}\to 0$, it follows that the constant sequence at $y$ converges to $x$. Thus $X$ is $T_1$ and $x\in\ov{\{y\}}$; a contradiction.
\end{proof}

Since we are no longer assuming $X$ is Hausdorff, the usual Closed Mapping Theorem (the continuous image of a compact space in a Hausdorff space is closed) does not apply. Hence, we must also make sure that an injective path in a $\Delta$-Hausdorff space is an embedding.

\begin{lemma}\label{injtoembeddinglemma}
If $X$ is $\Delta$-Hausdorff, then every injective-path in $X$ is a closed embedding.
\end{lemma}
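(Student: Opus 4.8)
The plan is to argue directly, exploiting the fact that the only compact subsets of $\ui$ whose images we must show are closed are images of subpaths — precisely what the $\Delta$-Hausdorff hypothesis controls. The general Closed Mapping Theorem (a continuous image of a compact space in a Hausdorff space is closed) is unavailable, and indeed the image of an \emph{arbitrary} compact subset of a $\Delta$-Hausdorff space, such as a Cantor set, need not be closed; fortunately we never need that.

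Let $\beta:\ui\to X$ be an injective path. First, $\beta(\ui)$ is closed in $X$, immediately from the definition of $\Delta$-Hausdorff applied to $\beta$ itself, so the remaining task is to show that $\beta$ is a homeomorphism onto its image, i.e. that $\beta^{-1}:\beta(\ui)\to\ui$ is continuous. Since $\beta$ is a bijection onto $\beta(\ui)$, this is equivalent to showing that $\beta(S)$ is open in $\beta(\ui)$ for every $S$ in some fixed subbasis of $\ui$; I would take the subbasis consisting of the half-open intervals $[0,b)$ and $(a,1]$ for $a,b\in\ui$.

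For such an $S$, injectivity of $\beta$ gives $\beta([0,b))=\beta(\ui)\setminus\beta([b,1])$ and $\beta((a,1])=\beta(\ui)\setminus\beta([0,a])$. Here $[b,1]$ and $[0,a]$ are closed subintervals of $\ui$: when nondegenerate, an increasing affine homeomorphism onto $\ui$ exhibits $\beta([b,1])$ and $\beta([0,a])$ as images of paths, hence closed in $X$ by $\Delta$-Hausdorffness; when degenerate, these images are singletons, closed because $X$ is $T_1$ (recall $\Delta$-Hausdorff $\Rightarrow T_1$). In every case the complement $\beta(S)$ is open in $\beta(\ui)$, so $\beta^{-1}$ is continuous; combined with the first step, $\beta$ is a closed embedding. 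I expect no real obstacle beyond the observation that the subbasis reduction isolates exactly the interval-image closedness that the $\Delta$-Hausdorff axiom is designed to supply.
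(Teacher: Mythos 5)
Your proof is correct, but it takes a genuinely different route from the paper's. You split the conclusion into two pieces: the image $\beta(\ui)$ is closed in $X$ (immediate from the definition of $\Delta$-Hausdorff applied to $\beta$), and $\beta$ is a homeomorphism onto its image, which you verify on the subbasis $\{[0,b)\}\cup\{(a,1]\}$ by using injectivity to write $\beta([0,b))=\beta(\ui)\setminus\beta([b,1])$ and $\beta((a,1])=\beta(\ui)\setminus\beta([0,a])$, with the closed-interval images handled by reparameterizing as paths and the degenerate cases by $T_1$. The paper instead proves the stronger statement that an injective path carries \emph{every} closed set $C\subseteq\ui$ to a closed subset of $X$: it writes $C=\bigcap_{n\in\bbn}F_n$ with each $F_n$ a finite disjoint union of closed intervals, observes that each $\alpha(F_n)$ is closed, and uses injectivity to conclude $\alpha(C)=\bigcap_{n}\alpha(F_n)$. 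Both arguments rest on the same two pillars --- closedness of images of closed subintervals via reparameterization, and injectivity to convert set-theoretic operations --- but yours avoids the decomposition of an arbitrary closed set as a countable intersection, needing only finitely many interval images per subbasic open set, at the small cost of having to assemble ``embedding with closed image'' into ``closed map'' at the end (which is immediate). One minor caveat: your motivational aside that the image of a compact subset such as a Cantor set need not be closed should not be leaned on --- the paper's version of the argument shows that an injective path does carry every compact subset of $\ui$ to a closed set --- but nothing in your proof depends on that remark.
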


\begin{proof}
Let $\alpha:\ui\to X$ be an injective-path and let $C\subseteq \ui$ be closed. Write $C=\bigcap_{n\in\bbn}F_n$ where $F_n$ is a finite, disjoint union of closed intervals. Since $X$ is $\Delta$-Hausdorff, if $[a,b]$ is a component of $F_n$, then $\alpha([a,b])$ is closed in $X$. Therefore, $\alpha(F_n)$ is closed in $X$ for all $n\in\bbn$. Since $\alpha$ is injective, we have $\alpha(C)=\bigcap_{n\in\bbn}\alpha(F_n)$ and we conclude that $\alpha(C)$ is closed in $X$.
\end{proof}

With Lemmas \ref{deltaclosedlemma} and \ref{injtoembeddinglemma} established, the same proof used in Section \ref{sectionproof} gives the following generalization of Theorem \ref{hausdorfftheorem}.

\begin{theorem}\label{mainthm}
Every path-connected, $\Delta$-Hausdorff topological space is arcwise connected.
\end{theorem}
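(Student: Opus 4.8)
The plan is to simply assemble the pieces already in hand, since Theorem~\ref{mainthm} is engineered to drop out of the machinery of Section~\ref{sectionproof} once the Hausdorff hypothesis is replaced everywhere it was used. First I would fix $x\neq y$ in a path-connected $\Delta$-Hausdorff space $X$ and choose a path $\alpha\colon\ui\to X$ from $x$ to $y$; this exists by path-connectedness. The first place the original proof invoked ``Hausdorff'' was in applying Lemma~\ref{maximalloopcancellationlemma}, whose actual hypothesis is that $X$ permits loop deletion. So the next step is to cite Lemma~\ref{deltaclosedlemma}, which tells us that $\Delta$-Hausdorff spaces permit loop deletion, and hence Lemma~\ref{maximalloopcancellationlemma} yields a maximal loop-cancellation $\scrv\in\scrl(\alpha)$.

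From there the argument is verbatim the one in the proof of Theorem~\ref{hausdorfftheorem}: since $\alpha$ is not a loop (as $x\neq y$), Remark~\ref{collapsableremark} gives that $\scrv$ is collapsible, so we may pick a collapsing function $\Gamma_{\scrv}$ and form the $\scrv$-reduction $\beta\colon\ui\to X$ with $\beta\circ\Gamma_{\scrv}=\alpha_{\scrv}$. Proposition~\ref{loopreductionisinjectiveprop} makes $\beta$ injective, and because $\Gamma_{\scrv}$ is a non-decreasing surjection fixing the endpoints, $\beta(0)=x$ and $\beta(1)=y$. The one remaining gap is that, without the Hausdorff assumption, we can no longer appeal to the Closed Mapping Theorem to conclude that the continuous injection $\beta$ is an embedding. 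This is exactly what Lemma~\ref{injtoembeddinglemma} supplies: every injective path in a $\Delta$-Hausdorff space is a (closed) embedding. Applying it to $\beta$ shows $\beta$ is a homeomorphism onto its image, so $X$ is arcwise connected.

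I do not anticipate any real obstacle here; the substantive work was isolating the two properties actually needed (permitting loop deletion, and injective paths being embeddings) and verifying in Lemmas~\ref{deltaclosedlemma} and~\ref{injtoembeddinglemma} that $\Delta$-Hausdorff spaces have them. The only thing to be careful about is to make sure no other use of ``Hausdorff'' is hiding in the earlier propositions: Proposition~\ref{loopreductionisinjectiveprop} and Remark~\ref{collapsableremark} are purely combinatorial statements about loop-cancellations and use nothing about separation axioms, and the constructions of $\Gamma_{\scrv}$ and $\alpha_{\scrv}$ are likewise topology-free. So the proof is genuinely just: path $\to$ maximal loop-cancellation (via Lemma~\ref{deltaclosedlemma} and Lemma~\ref{maximalloopcancellationlemma}) $\to$ injective reduction $\beta$ (via Remark~\ref{collapsableremark} and Proposition~\ref{loopreductionisinjectiveprop}) $\to$ embedding (via Lemma~\ref{injtoembeddinglemma}), which I would write out in a few sentences mirroring the proof of Theorem~\ref{hausdorfftheorem}.
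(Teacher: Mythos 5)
Your proposal is correct and is exactly the argument the paper intends: the paper proves Theorem~\ref{mainthm} by observing that, with Lemma~\ref{deltaclosedlemma} (to supply the loop-deletion hypothesis of Lemma~\ref{maximalloopcancellationlemma}) and Lemma~\ref{injtoembeddinglemma} (to replace the Closed Mapping Theorem step), the proof of Theorem~\ref{hausdorfftheorem} goes through verbatim. Your identification of the only two places where the Hausdorff hypothesis was used matches the paper's reasoning precisely.
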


Upon inspection, one can see that all parts of B\"{o}rger's proof of Theorem \ref{hausdorfftheorem} also goes through for $\Delta$-Hausdorff spaces. Hence, Theorem \ref{mainthm} can be proven without appealing to the axiom of choice.

\begin{example}
Even with the weakened hypothesis, the converse of Theorem \ref{mainthm} is certainly not true. For a counterexample, let $X$ be the quotient space $[-1,1]/\mathord{\sim}$ where $-\frac{n}{n+1}\sim \frac{n}{n+1}$ for $n\in\bbn$ (this space is precisely illustrated in Figure \ref{tfig}). As a quotient of a closed interval, $X$ is $\Delta$-generated. However, one can show that $X$ is not arcwise connected and therefore is not $\Delta$-Hausdorff. Let $a,b$ be the images of $-1,1$ in $X$ respectively and let $Y$ be the space obtained by attaching a copy of $[0,1]$ to $X$ by identifying $0\sim a$ and $1\sim b$. Now $Y$ is arcwise connected but it is not $\Delta$-Hausdorff.
\end{example}

Indeed, it is unrealistic to hope that there is some simple topological property $P$ that gives ``path connected $+$ $P$ $\Leftrightarrow$ arcwise connected" and whose definition doesn't involving quantifying over all paths in the space. However, it is possible to show that $X$ is $\Delta$-Hausdorff if and only if every non-loop path in $X$ has an injective $\scru$-reduction. Since we have already proven the ``hard" direction in this note, we'll leave the converse as an exercise.

\end{document}